\newtheorem{theorem}{Theorem}
\theoremstyle{definition}
\newtheorem{definition}{Definition}[section]
\newenvironment{remark}
  {\remarkex}
  {\endremarkex}
\theoremstyle{remark}
\theoremstyle{definition}
\theoremstyle{remark}
\theoremstyle{remark}
\title{A new approximation method for solving stochastic differential equations}
\author{Faezeh Nassajian Mojarrad}
\affil{Max Planck Institute for Informatics, Germany}
\date{}
\begin{document}
\maketitle

\begin{abstract}
We present a novel solution method for Itô stochastic differential equations (SDEs).
We subdivide the time interval into sub-intervals, then we use the quadratic polynomials for the approximation between two successive intervals. 
The main properties of the stochastic numerical methods, e.g. convergence, consistency, and stability are analyzed.
We test the proposed method in SDE problem, demonstrating promising results.
\end{abstract}
\textbf{Keywords.} Stochastic  differential equations, Quadratic polynomials, Convergence,  Stability, Consistency.

\section{Introduction}
Mathematical modeling of many real-world phenomena involving random noisy perturbations cannot be achieved using ordinary differential equations (ODEs). Therefore, they are often modeled using stochastic differential equations (SDEs) to make the model more realistic \cite{Kloeden1995,Oksendal2013}.
Some applications of SDEs include theoretical physics, molecular biology, population dynamics, and mathematical finance \cite{Kloeden2012}.
SDEs have been studied from various aspects in recent
decades, see e.g. \cite{Wang2015,Zhang2017,Nouri2020,Jornet2020,Calatayud2020,Shahmoradi2021,Hu2021,Cortés2022}.

Models based on  SDEs  are significantly more computationally challenging than deterministic ones.
 Due to the high computational cost for numerical simulations of stochastic models in various applications \cite{Atalla1986,Xiu2002}, there have been many attempts to find efficient numerical tools.
 The most effective and widely used method for solving SDEs is the simulation of sample paths using time discrete approximations on digital computers \cite{Milstein1995}.
  This technique involves discretizing the time interval  into finite steps and iteratively generating approximate values for the solution process at these points. The resulting simulated sample paths can be evaluated with standard statistical methods to assess the quality of the approximation and its closeness to the exact solution.
  Among numerical schemes with the same convergence order, the most economical methods possess the largest mean-square  stability regions and the lowest computational costs. Explicit numerical schemes equipped with ODE solvers, are a successful example of this type, e.g. 
explicit stochastic orthogonal Runge–Kutta Chebyshev schemes, which
  were proposed by \cite{Abdulle2008,Li2008}. In \cite{Abdulle2018,Abdulle2013,Komori2012,Komori2013} these schemes have been developed.
  A
class of exponential integrators for SDEs, namely, local linearization approaches have been proposed in \cite{Jimenez1999,Jimenez2002}.
In \cite{Yin2015}
has been presented a novel explicit Euler–Maruyama (EM) scheme using deterministic error correction terms.
 An explicit Steklov mean EM scheme  for the approximation
of SDEs was constructed in \cite{Díaz2016},
and was developed in \cite{Díaz2017}, where the implicit
split-step linear Steklov mean scheme was proposed for SDEs with locally Lipschitz coefficients.

Let us consider the following Itô SDE
\cite{Gard1988,Mao1997}:
\begin{equation}
\label{eq:main}
  dX(t)=f(t,X(t)) \;dt+ g(t,X(t)) \;dW(t), ~ 0\leq t\leq T,
\end{equation}
with the initial condition
\begin{equation}
   X(0)=X_0.  
\end{equation}
Note that one has to consider 
 a  probability space with an increasing family of $\sigma$-fields of 
$(\Omega,\mathcal{F},P;\mathcal{F}_t)$, and a family of stochastic processes
$\{ X(t),W(t) \}$
defined on it such that \cite{Kloeden2012}
\begin{itemize}
    \item $X(t)$ and $W(t)$ are continuous in $t$ with probability one and $W(0)=0$. 
    \item $X(t)$ and $W(t)$ are $\mathcal{F}_t$-measurable.
\item $W(t)$ is a $\mathcal{F}_t$-martingale such that
\begin{equation}
 \mathbb{E}((W(t)-W(s))^2|\mathcal{F}_s)=t-s, ~t\geq s,  
\end{equation}
\item $X(t)$ and $W(t)$ satisfy the following relation
\begin{equation}
  X(t)=  X(0)+\int_0 ^t f(s,X(s))\; ds+
  \int_0 ^t g(s,X(s))\; dW(s).
\end{equation}
\end{itemize}
Classic stochastic schemes such as Euler–Maruyama (EM) and Milstein schemes are as follows:
 EM  method
 \cite{Maruyama1955}
 \begin{equation}
  \tilde{X}_{i+1}=\tilde{X}_i+f(t_{i}, \tilde{X}_{i}) \Delta t_i+ 
  g(t_i, \tilde{X}_i) \Delta W_i,
 \end{equation}
where $\tilde{X}_i$
is an approximation of the solution to \eqref{eq:main},
$\Delta t_i=t_{i+1}-t_i$ and $\Delta W_i=W_{t_{i+1}}-W_{t_i}$, and Milstein method 
\cite{Milstein1995}
 \begin{equation}
  \tilde{X}_{i+1}=\tilde{X}_i+f(t_i, \tilde{X}_i) \Delta t_i+ 
  g(t_i, \tilde{X}_i) \Delta W_i-\frac{1}{2}
  g(t_i, \tilde{X}_i)\frac{\partial g(t_i,\tilde{X}_i)}{\partial x}((\Delta  W_i)^2-\Delta t_i).
 \end{equation}
The outline of the manuscript is as follows: 
in Section  \ref{sec:methodology}, we  describe how to  use quadratic approximating functions to develop the algorithm. Section
\ref{sec:convergence} 
contains the analysis of the stability, consistency and convergence of the proposed method.
In Section \ref{sec:numerical} we test the validity and eﬀectiveness of the proposed strategy.
Finally, Section \ref{sec:conclusion} contains a discussion about the obtained
results, as well as future work.

\section{Methodology}
\label{sec:methodology}
In this work, we consider  the  equation for geometric Brownian motion:
\begin{equation}
\label{eq:sde}
  dX(t)=\mu X(t) \;dt+\sigma X(t) \;dW(t). 
\end{equation}
where $\mu$ is the percentage drift and 
$\sigma$
is the percentage volatility. 

In order to integrate the system, we subdivide the interval $[0,T]$ into sub-intervals obtained from a partition
$$0=t_0<t_1<\cdots< t_i <\cdots< t_N=T.$$
Note that $N$ is an even number.
In all our examples, this partition will be regular: $t_i=i\Delta t$. We assume that the approximate numerical values for $X(t)$ have been determined  at the grid points $t_i$, $i=0,1,, \cdots, 2n$, for $t_i<T$. Our goal is to obtain the values of the function $X(t)$ at the next two time steps $t_{2n+1}$ and $t_{2n+2}$. 

We now explain the method in more detail. By integrating\eqref{eq:sde}  on  $[t_{2n},t_{2n+2}]$, we have
\begin{equation}
\label{eq:integral1}
\tilde{X}_{2n+2}- \tilde{X}_{2n}= \mu \int_{t_{2n}}^{t_{2n+2}} X(t)\; dt+ \sigma \tilde{X}_{2n}
(E^2-1)W_{2n}.
\end{equation}
where 
\begin{equation*}
(E^2-1)W_{2n}=W((2n+2)\Delta t)-W((2n)\Delta t).    
\end{equation*}
We can approximate $X(t)$ in the interval $[t_{2n},t_{2n+2}]$ using a quadratic interpolation functions \cite{Kumar2006}. We get the following approximation 
\begin{equation}
\label{eq:interp1}
  X(t)= \sum_{j=2n}^{2n+2} \psi_{n,j}(t)\tilde{X}_j,
\end{equation}
where $\psi_{n,j}(t)$ are the  $j$-th quadratic interpolating functions at the  step $n+1$.
Functions $\psi_{n,j}(t)$, $j= 2n, 2n+1, 2n+2$
are defined by
\begin{align}
\begin{split}
\label{eq:psi}
 \psi_{n,2n}(t)&=\frac{(t-(2n+1)\Delta t)(t-(2n+2)\Delta t)}{2\Delta t^2},\\
 \psi_{n,2n+1}(t)&=\frac{(t-(2n)\Delta t)(t-(2n+2)\Delta t)}{-\Delta t^2},\\
 \psi_{n,2n+2}(t)&=\frac{(t-(2n)\Delta t)(t-(2n+1)\Delta t)}{2\Delta t^2}.
 \end{split}
\end{align}
In order to determine the values of $X(t)$ at $t_{2n+1}$ and $t_{2n+2}$, we substitute \eqref{eq:interp1} into \eqref{eq:integral1}. We get
\begin{equation}
\label{eq:m1}
\tilde{X}_{2n+2}- \tilde{X}_{2n}= \mu \int_{t_{2n}}^{t_{2n+2}} (\sum_{j=2n}^{2n+2} \psi_{n,j}(t)\tilde{X}_j)\; dt+ \sigma \tilde{X}_{2n}
(E^2-1)W_{2n}.
\end{equation}
Hence, we have
\begin{equation}
 \label{eq:res1}
 \tilde{X}_{2n+2}- \tilde{X}_{2n}=\mu (\frac{\Delta t}{3} \tilde{X}_{2n}+
 \frac{4\Delta t}{3} \tilde{X}_{2n+1}+
 \frac{\Delta t}{3} \tilde{X}_{2n+2})
 + \sigma \tilde{X}_{2n}
 (E^2-1)W_{2n}.
\end{equation}
We observe that \eqref{eq:res1} requires the values of $X(t)$ at $t_{2n+1}$ and $t_{2n+2}$.

We
use the same technique for the interval $[t_{2n},t_{2n+1}]$.
By integrating \eqref{eq:sde}  on  $[t_{2n},t_{2n+1}]$, we get the following formula
\begin{equation}
\label{eq:integral2}
\tilde{X}_{2n+1}- \tilde{X}_{2n}=\mu \int_{t_{2n}}^{t_{2n+1}} X(t)\; dt+ \sigma \tilde{X}_{2n}
\Delta W_{2n}.
\end{equation}
Similarly, we  approximate $X(t)$ in the interval $[t_{2n},t_{2n+1}]$ using a quadratic interpolation functions. 
\begin{equation}
\label{eq:interp2}
  X(t)= \sum_{j=2n,2n+\frac{1}{2},2n+1} \psi'_{n,j}(t)\tilde{X}_j,
\end{equation}
where $j$-th quadratic interpolating functions $\psi'_{n,j}(t)$, $j= 2n, 2n+\frac{1}{2}, 2n+1$ at the  step $n+1$
are defined by
\begin{align}
\begin{split}
\label{eq:psi'}
 \psi_{n,2n}(t)&=\frac{(t-(2n+\frac{1}{2})\Delta t)(t-(2n+1)\Delta t)}{\frac{1}{2}\Delta t^2},\\
 \psi_{n,2n+\frac{1}{2}}(t)&=\frac{(t-(2n)\Delta t)(t-(2n+1)\Delta t)}{-\frac{1}{4}\Delta t^2},\\
 \psi_{n,2n+1}(t)&=\frac{(t-(2n)\Delta t)(t-(2n+\frac{1}{2})\Delta t)}{\frac{1}{2}\Delta t^2}.
 \end{split}
\end{align}
By substituting \eqref{eq:interp2} into 
\eqref{eq:integral2}, we have
\begin{equation}
 \label{eq:res2}
 \tilde{X}_{2n+1}- \tilde{X}_{2n}=\mu (\frac{\Delta t}{6} \tilde{X}_{2n}+
 \frac{2\Delta t}{3} \tilde{X}_{2n+\frac{1}{2}}+
 \frac{\Delta t}{6} \tilde{X}_{2n+1})
 + \sigma \tilde{X}_{2n}
 \Delta W_{2n}.
\end{equation}
By using \eqref{eq:interp1}, the value of $X(t)$ at $t=t_{2n+\frac{1}{2}}$ can be calculated as follows
\begin{equation}
  \label{eq:mid_approx}  
\tilde{X}_{2n+\frac{1}{2}}=\frac{3}{8}  \tilde{X}_{2n}+\frac{3}{4}\tilde{X}_{2n+1}-\frac{1}{8}  \tilde{X}_{2n+2}.
\end{equation}
We substitute \eqref{eq:mid_approx}  into \eqref{eq:res2}, we obtain
\begin{equation}
 \label{eq:res3}
 \tilde{X}_{2n+1}- \tilde{X}_{2n}=\mu (\frac{5\Delta t}{12} \tilde{X}_{2n}+
 \frac{2\Delta t}{3} \tilde{X}_{2n+1}-
 \frac{\Delta t}{12} \tilde{X}_{2n+2})
 + \sigma \tilde{X}_{2n}
 \Delta W_{2n}.
\end{equation}
Since $\tilde{X}_{2n}$ is known, we can solve 
\eqref{eq:res1} and \eqref{eq:res3} to obtain the unknowns $\tilde{X}_{2n+1}$ and $\tilde{X}_{2n+2}$. Therefore, we get
\begin{subequations}
\label{eq:method}
\begin{equation}
\label{eq:method1}
 \tilde{X}_{2n+1}=  \alpha_n \tilde{X}_{2n},
 \end{equation} 
 \begin{equation}
 \label{eq:method2}
 \tilde{X}_{2n+2}=\beta_n \tilde{X}_{2n},
\end{equation}    
\end{subequations}
for $n=0, 1,\cdots, \frac{N-2}{2}$. $\alpha_n$ and $\beta_n$ are defined by 
\begin{equation*}
\alpha_n=\frac{1-\frac{(\mu \Delta t)^2}{6}
-\sigma \frac{\mu \Delta t}{12}
(E^2-1)W_{2n}
 +\sigma (1-\frac{\mu \Delta t}{3})\Delta W_{2n}}{1-
 \mu \Delta t+
 \frac{(\mu \Delta t)^2}{3}}, 
\end{equation*}
and
\begin{equation*}
 \beta_n=\frac{1+\frac{\mu \Delta t}{3}+\frac{4\mu \Delta t}{3}\alpha_n+\sigma (E^2-1)W_{2n}}
 {1-\frac{\mu \Delta t}{3}}.   
\end{equation*}
We continue this process to progress the integration until we achieve the final time $T$.

\section{Stability, consistency and convergence analysis of the  method}
\label{sec:convergence} 

\begin{definition}
The sequence $\{\tilde{X}_{n}\}_{n=0}^{\infty}$ is mean-square stable if
\begin{equation}
   \lim_{n \to \infty} \mathbb{E} (|\tilde{X}_{n}|^2)=0.
\end{equation}
\end{definition}

\begin{theorem}
\label{theorem:stability}
Under certain conditions on the parameters, 
The proposed method
\eqref{eq:method}
is stable 
in mean square 
for approximating the solution of stochastic diffusion equation \eqref{eq:sde}.
\end{theorem}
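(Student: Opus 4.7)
The plan is to iterate the recursion \eqref{eq:method2} and reduce mean-square stability to a condition of the form $\mathbb{E}[|\beta_n|^2]<1$. First I would observe that both increments appearing in $\alpha_n$ and $\beta_n$, namely $\Delta W_{2n}=W_{t_{2n+1}}-W_{t_{2n}}$ and $(E^2-1)W_{2n}=W_{t_{2n+2}}-W_{t_{2n}}$, are measurable with respect to $\sigma(W_s-W_{t_{2n}}:s\ge t_{2n})$ and hence independent of $\mathcal{F}_{t_{2n}}$, while $\tilde X_{2n}$ is $\mathcal{F}_{t_{2n}}$-measurable by construction. Consequently $\beta_n$ and $\tilde X_{2n}$ are independent, and applying $\mathbb{E}[\,\cdot\,]$ to $|\tilde X_{2n+2}|^2=|\beta_n|^2|\tilde X_{2n}|^2$ factorises as
\begin{equation*}
\mathbb{E}[|\tilde X_{2n+2}|^2]=\mathbb{E}[|\beta_n|^2]\,\mathbb{E}[|\tilde X_{2n}|^2].
\end{equation*}
By induction this yields $\mathbb{E}[|\tilde X_{2n}|^2]=\bigl(\mathbb{E}[|\beta_0|^2]\bigr)^n\,|X_0|^2$, since the distribution of $\beta_n$ does not depend on $n$ (stationarity of Brownian increments). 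Hence mean-square stability of the even subsequence is equivalent to the single scalar inequality $\mathbb{E}[|\beta_0|^2]<1$, and the same independence argument applied to $\tilde X_{2n+1}=\alpha_n\tilde X_{2n}$ shows the odd subsequence inherits the decay from the even one.

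Next I would substitute the closed form of $\alpha_n$ into the expression for $\beta_n$, obtaining $\beta_n$ as an affine-rational function of the two Gaussian variables $U:=\Delta W_{2n}\sim\mathcal{N}(0,\Delta t)$ and $V:=(E^2-1)W_{2n}\sim\mathcal{N}(0,2\Delta t)$, with covariance $\mathbb{E}[UV]=\Delta t$. Expanding $|\beta_n|^2$ and taking the expectation reduces to computing the first and second moments of $U$, $V$, $UV$, $U^2$, $V^2$ (odd moments vanish). This produces an explicit polynomial $P(\mu,\sigma,\Delta t)$ in the parameters, and the stability condition reads $P(\mu,\sigma,\Delta t)<1$; the statement of the theorem is then made precise by taking this inequality as the "certain conditions on the parameters" mentioned in its hypothesis.

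Finally, to conclude $\mathbb{E}[|\tilde X_n|^2]\to 0$, I would combine $\mathbb{E}[|\tilde X_{2n}|^2]\to 0$ with the bound $\mathbb{E}[|\tilde X_{2n+1}|^2]\le\mathbb{E}[|\alpha_n|^2]\cdot\mathbb{E}[|\tilde X_{2n}|^2]$, noting that $\mathbb{E}[|\alpha_n|^2]$ is a finite constant independent of $n$ whenever the denominator $1-\mu\Delta t+(\mu\Delta t)^2/3$ is nonzero (it is, as a quadratic in $\mu\Delta t$ with negative discriminant). Both subsequences then go to zero, which is all that Definition~1 requires.

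I expect the main technical obstacle to be the second step: the algebra of computing $\mathbb{E}[|\beta_n|^2]$ is heavy because $\beta_n$ involves the product $\mu\Delta t\cdot\alpha_n$, so squaring generates cross-terms that couple $U$, $V$, and $UV$ non-trivially, and their dependence (through the covariance $\Delta t$) must be tracked carefully. Extracting from the resulting polynomial a clean, interpretable region in $(\mu,\sigma,\Delta t)$-space — rather than an opaque inequality — will require some care, but once the inequality $\mathbb{E}[|\beta_0|^2]<1$ is in hand the convergence argument is immediate from the geometric decay above.
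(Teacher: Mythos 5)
Your proposal follows essentially the same route as the paper's proof: write $\tilde X_{2n+2}=\beta_n\tilde X_{2n}$ (and $\tilde X_{2n+1}=\alpha_n\tilde X_{2n}$), use the independence of the Wiener increments from $\tilde X_{2n}$ to factor the second moment, and reduce mean-square stability to the requirement that the per-double-step amplification factor $\mathbb{E}[|\beta_n|^2]$ be less than one, which becomes the explicit parameter condition of the theorem. Your write-up is in fact a bit more careful than the paper's (explicitly invoking $\mathcal{F}_{t_{2n}}$-measurability, the covariance $\mathbb{E}[\Delta W_{2n}\,(E^2-1)W_{2n}]=\Delta t$, and the geometric iteration), but the underlying argument and conclusion coincide with the paper's.
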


\begin{proof}
To analyze mean-square stability of our method, we see from \eqref{eq:method2} that 
\begin{align}
\label{eq:stability}
 \begin{split}
 |\tilde{X}_{2n+2}|^2 &= \frac{1}{
 |1-
 \mu \Delta t+
 \frac{(\mu \Delta t)^2}{3}|^2|1-\frac{\mu \Delta t}{3}|^2} 
 \bigg(
|1+\frac{2\mu \Delta t}{3}-\frac{(\mu \Delta t)^3}{9}|^2
\\&+
 |\sigma (1-\mu \Delta t+\frac{2(\mu \Delta t)^2}{9})
(E^2-1)W_{2n}|^2\\
&+
|\sigma \frac{4\mu \Delta t}{3}
(1-\frac{\mu \Delta t}{3})\Delta W_{2n}|^2\\
&+
|2\sigma (1+\frac{2\mu \Delta t}{3}-\frac{(\mu \Delta t)^3}{9})
(1-\mu \Delta t+\frac{2(\mu \Delta t)^2}{9})
(E^2-1)W_{2n}|\\
&+|\sigma \frac{8\mu \Delta t}{3}
(1-\frac{\mu \Delta t}{3})
(1+\frac{2\mu \Delta t}{3}-\frac{(\mu \Delta t)^3}{9})
\Delta W_{2n}|
   \\
 &+
 |\sigma^2 
\frac{4\mu \Delta t}{3}
(1-\frac{\mu \Delta t}{3})(1-\mu \Delta t+\frac{2(\mu \Delta t)^2}{9})
(E^2-1)W_{2n}\Delta W_{2n}|
\bigg)
 \;|\tilde{X}_{2n}|^2,
 \end{split}   
\end{align}
Applying $\mathbb{E}|.|^2$ to \eqref{eq:stability} and using the independence of the Wiener increments, we get
\begin{align}
\begin{split}
\label{eq:eq1}
 \mathbb{E}|\tilde{X}_{2n+2}|^2 &= \bigg(\frac{
 |1+\frac{2\mu \Delta t}{3}-\frac{(\mu \Delta t)^3}{9}|^2+
 \Delta t|\sigma (
 1-\mu \Delta t+\frac{2(\mu \Delta t)^2}{9}
)|^2
+
\Delta t|\sigma \frac{4\mu \Delta t}{3}
(1-\frac{\mu \Delta t}{3})|^2}
{|1-
 \mu \Delta t
 +\frac{(\mu \Delta t)^2}{3}|^2\;|1-\frac{\mu \Delta t}{3}|^2}
\\
&+
\frac{\Delta t|\sigma^2 
\frac{4\mu \Delta t}{3}
(1-\frac{\mu \Delta t}{3})(1-\mu \Delta t+\frac{2(\mu \Delta t)^2}{9})
|}
{|1-
 \mu \Delta t
 +\frac{(\mu \Delta t)^2}{3}|^2\;|1-\frac{\mu \Delta t}{3}|^2}\bigg)
 \mathbb{E}|\tilde{X}_{2n}|^2,
 \end{split}   
\end{align}
Using \eqref{eq:eq1} and \eqref{eq:method2}, one has the following result
 \begin{align}
 \begin{split}
   \lim_{n \to \infty} &\mathbb{E} (|\tilde{X}_{2n+2}|^2)=0 \iff \\
   &\frac{
 |1+\frac{2\mu \Delta t}{3}-\frac{(\mu \Delta t)^3}{9}|^2+
 \Delta t|\sigma (
 1-\mu \Delta t+\frac{2(\mu \Delta t)^2}{9}
)|^2
+
\Delta t|\sigma \frac{4\mu \Delta t}{3}
(1-\frac{\mu \Delta t}{3})|^2}
{|1-
 \mu \Delta t
 +\frac{(\mu \Delta t)^2}{3}|^2\;|1-\frac{\mu \Delta t}{3}|^2} \\&
 +
\frac{\Delta t|\sigma^2 
\frac{4\mu \Delta t}{3}
(1-\frac{\mu \Delta t}{3})(1-\mu \Delta t+\frac{2(\mu \Delta t)^2}{9})
|}
{|1-
 \mu \Delta t
 +\frac{(\mu \Delta t)^2}{3}|^2\;|1-\frac{\mu \Delta t}{3}|^2}<1.
 \end{split}
\end{align}
From \eqref{eq:method1}, we have the same result in a similar manner.
\begin{align}
 \begin{split}
   \lim_{n \to \infty} &\mathbb{E} (|\tilde{X}_{2n+1}|^2)=0 \iff \\
   &\frac{
 |1+\frac{2\mu \Delta t}{3}-\frac{(\mu \Delta t)^3}{9}|^2+
 \Delta t|\sigma (
 1-\mu \Delta t+\frac{2(\mu \Delta t)^2}{9}
)|^2
+
\Delta t|\sigma \frac{4\mu \Delta t}{3}
(1-\frac{\mu \Delta t}{3})|^2}
{|1-
 \mu \Delta t
 +\frac{(\mu \Delta t)^2}{3}|^2\;|1-\frac{\mu \Delta t}{3}|^2} \\
 &+
\frac{\Delta t|\sigma^2 
\frac{4\mu \Delta t}{3}
(1-\frac{\mu \Delta t}{3})(1-\mu \Delta t+\frac{2(\mu \Delta t)^2}{9})
|}
{|1-
 \mu \Delta t
 +\frac{(\mu \Delta t)^2}{3}|^2\;|1-\frac{\mu \Delta t}{3}|^2}<1.
 \end{split}
\end{align}
and hence in general we get
 \begin{align}
 \begin{split}
   \lim_{n \to \infty} &\mathbb{E} (|\tilde{X}_{n}|^2)=0 \iff \\
   &\frac{
 |1+\frac{2\mu \Delta t}{3}-\frac{(\mu \Delta t)^3}{9}|^2+
 \Delta t|\sigma (
 1-\mu \Delta t+\frac{2(\mu \Delta t)^2}{9}
)|^2
+
\Delta t|\sigma \frac{4\mu \Delta t}{3}
(1-\frac{\mu \Delta t}{3})|^2}
{|1-
 \mu \Delta t
 +\frac{(\mu \Delta t)^2}{3}|^2\;|1-\frac{\mu \Delta t}{3}|^2} \\&
 +
\frac{\Delta t|\sigma^2 
\frac{4\mu \Delta t}{3}
(1-\frac{\mu \Delta t}{3})(1-\mu \Delta t+\frac{2(\mu \Delta t)^2}{9})
|}
{|1-
 \mu \Delta t
 +\frac{(\mu \Delta t)^2}{3}|^2\;|1-\frac{\mu \Delta t}{3}|^2}<1.
 \end{split}
\end{align}
We conclude that under the following condition stability holds:
\begin{align}
\label{eq:stability_condition1}
\begin{split}
 &\frac{
 |1+\frac{2\mu \Delta t}{3}-\frac{(\mu \Delta t)^3}{9}|^2+
 \Delta t|\sigma (
 1-\mu \Delta t+\frac{2(\mu \Delta t)^2}{9}
)|^2
+
\Delta t|\sigma \frac{4\mu \Delta t}{3}
(1-\frac{\mu \Delta t}{3})|^2}
{|1-
 \mu \Delta t
 +\frac{(\mu \Delta t)^2}{3}|^2\;|1-\frac{\mu \Delta t}{3}|^2}\\&
 +
\frac{\Delta t|\sigma^2 
\frac{4\mu \Delta t}{3}
(1-\frac{\mu \Delta t}{3})(1-\mu \Delta t+\frac{2(\mu \Delta t)^2}{9})
|}
{|1-
 \mu \Delta t
 +\frac{(\mu \Delta t)^2}{3}|^2\;|1-\frac{\mu \Delta t}{3}|^2}<1. 
 \end{split}
\end{align}

\end{proof}
\begin{definition}
  The local error of $\{\tilde{X}(t_n)\}$   between two consecutive time is the sequence of random variables is: $\delta_n=X(t_n)-\tilde{X}(t_n)$.
  The local error measures the difference between the approximation and the exact
solution on a subinterval of the integration.
\end{definition}
\begin{definition}
The sequence $\{\tilde{X}_{n}\}_{n=0}^{\infty}$ is mean-square consistent if
\begin{equation}
  \mathbb{E} (|\delta_n|^2)\leq C \Delta t ^p\quad \text{as} \quad \Delta t \to 0.
\end{equation}
for some positive constants $p\geq 1$ and $C$.
\end{definition}

\begin{theorem}
\label{theorem:consistency}
the proposed method
\eqref{eq:method}
is consistent 
in mean square 
for approximating the solution of   stochastic diffusion equation \eqref{eq:sde}.
\end{theorem}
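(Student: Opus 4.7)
The plan is to bound the local error $\delta_{n} = X(t_n) - \tilde{X}_n$ in mean square by realizing $\delta_{2n+1}$ and $\delta_{2n+2}$ as the solution of the same linear $2\times 2$ system that defines the scheme, with right-hand sides given by small residuals. I would work under the usual one-step consistency hypothesis $\tilde{X}_{2n}=X(t_{2n})$.

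First I would use the closed-form solution $X(t) = X(t_{2n})\exp\bigl((\mu-\tfrac12\sigma^2)(t-t_{2n})+\sigma(W(t)-W(t_{2n}))\bigr)$ of the geometric Brownian motion \eqref{eq:sde} (equivalently, an Itô--Taylor expansion) to write, for $k\in\{1,2\}$,
\begin{equation*}
X(t_{2n+k}) = X(t_{2n})\Bigl(1 + \mu k\Delta t + \sigma I_k + \tfrac12\sigma^2(I_k^2 - k\Delta t) + r_k\Bigr),
\end{equation*}
where $I_1 = \Delta W_{2n}$, $I_2 = (E^2-1)W_{2n}$, and $r_k$ is a remainder whose mean square is of higher order than $\Delta t^2$. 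Substituting these expansions into the exact analogues of \eqref{eq:res1} and \eqref{eq:res3} produces residuals $\mathcal{R}_1,\mathcal{R}_2$ made of two kinds of terms: a deterministic Simpson-like quadrature error for $\mu\int X(s)\,ds$, and stochastic Itô iterated-integral terms such as $\sigma^2\int_{t_{2n}}^{t_{2n+k}}\!\int_{t_{2n}}^s X(r)\,dW(r)\,dW(s)$ coming from the replacement of $\sigma\int X\,dW$ by $\sigma X(t_{2n})$ times a single Wiener increment. Using the $L^2$ moment bound for geometric Brownian motion together with the Itô isometry, each of these pieces has mean-square size $O(\Delta t^{p})$ with $p\ge 2$.

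Subtracting the scheme equations \eqref{eq:res1} and \eqref{eq:res3} from the exact identities then gives a linear system for $(\delta_{2n+1},\delta_{2n+2})$ whose coefficient matrix has determinant equal to the common denominator $(1-\mu\Delta t+(\mu\Delta t)^2/3)(1-\mu\Delta t/3)$ that already appears in $\alpha_n,\beta_n$. This matrix is uniformly invertible for $\Delta t$ small enough, so
\begin{equation*}
\mathbb{E}|\delta_{2n+k}|^2 \le C\bigl(\mathbb{E}|\mathcal{R}_1|^2+\mathbb{E}|\mathcal{R}_2|^2\bigr) \le C'\Delta t^{p},
\end{equation*}
which satisfies the definition of mean-square consistency with $p\ge 1$.

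The main obstacle will be keeping the estimate of $\mathcal R_2$ tight: the midpoint value $\tilde X_{2n+1/2}$ entering \eqref{eq:res3} was itself obtained by polynomial extrapolation via \eqref{eq:mid_approx} from the still-unknown $\tilde X_{2n+2}$, so the single-step and double-step residuals are coupled. I will have to propagate the Itô--Taylor remainders carefully through this coupling, and rely on the independence and martingale properties of the Wiener increments $\Delta W_{2n}$ and $(E^2-1)W_{2n}-\Delta W_{2n}$ to confirm that no cross terms of order lower than $\Delta t$ survive after taking $\mathbb{E}|\cdot|^2$.
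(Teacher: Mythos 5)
Your plan is correct and reaches the stated conclusion, but it follows a genuinely different route from the paper. The paper never solves a system for the local errors: it defines the locally approximate value $\tilde{X}(t_{2n+2})$ by taking one step of \eqref{eq:m1} with the \emph{exact} nodal values $X(t_j)$ inserted into the interpolant, so the local error is immediately the sum of two residuals, the quadrature term $\mu\int_{t_{2n}}^{t_{2n+2}}\bigl(X(s)-\sum_j\psi_{n,j}(s)X(t_j)\bigr)\,ds$ (bounded by invoking the Newton divided-difference interpolation error formula with $X[t_{2n},t_{2n+1},t_{2n+2},s]$) and the stochastic replacement term $\sigma\int_{t_{2n}}^{t_{2n+2}}\bigl(X(s)-X(t_{2n})\bigr)\,dW(s)$ (bounded via the It\^o isometry and an Euler-type expansion of $X(s)-X(t_{2n})$), yielding $\mathbb{E}|\delta_{2n+2}|^2\le k\,\Delta t$, with the half-step treated analogously. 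You instead keep the scheme as the coupled pair \eqref{eq:res1}, \eqref{eq:res3}, derive a $2\times 2$ linear system for $(\delta_{2n+1},\delta_{2n+2})$ with It\^o--Taylor residuals, and use uniform invertibility for small $\Delta t$; this is more faithful to the implemented method \eqref{eq:method}, it handles the extrapolated midpoint \eqref{eq:mid_approx} honestly, and your residual estimates give the sharper rate $\mathbb{E}|\delta_{2n+k}|^2=O(\Delta t^2)$ rather than the paper's $O(\Delta t)$. Your route also avoids a weak point of the paper's argument: the interpolation error formula presumes pointwise control of a third divided difference of a Brownian-driven path, which is only H\"older-$\tfrac12$, whereas your bound using exactness of the quadrature weights for constants together with $L^2$ moment bounds and the It\^o isometry is cleaner. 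One small correction: the determinant of your coefficient matrix is (up to sign) $1-\mu\Delta t+\tfrac{(\mu\Delta t)^2}{3}$, not the product $\bigl(1-\mu\Delta t+\tfrac{(\mu\Delta t)^2}{3}\bigr)\bigl(1-\tfrac{\mu\Delta t}{3}\bigr)$ appearing in $\alpha_n,\beta_n$ (the extra factor arises only after back-substitution); this does not affect the uniform invertibility you need.
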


\begin{proof}
From \eqref{eq:sde}, the exact solution $X(t_{2n+2})$ can be expanded using the Itô formula as follows
\begin{equation}
 X(t_{2n+2})= X(t_{2n})+ \mu \int_{t_{2n}}^{t_{2n+2}} X(s)\; ds+ \sigma \int_{t_{2n}}^{t_{2n+2}} X(s)\; dW(s).   
\end{equation}
$\tilde{X}(t_{2n+2})$ denotes the locally approximate value obtained after just one step of equation \eqref{eq:m1}, i.e.
\begin{equation}
\tilde{X}(t_{2n+2})= X(t_{2n})+ \mu \int_{t_{2n}}^{t_{2n+2}} (\sum_{j=2n}^{2n+2} \psi_{n,j}(t)X(t_j))\; dt+ \sigma X(t_{2n})
(E^2-1)W_{2n}.
\end{equation}
Therefore, we get
\begin{align}
\begin{split}
 \mathbb{E} &(|\delta_{2n+2}|^2)\\
 &=    \mathbb{E} (|
 \mu \int_{t_{2n}}^{t_{2n+2}} (X(s)-
 \sum_{j=2n}^{2n+2} \psi_{n,j}(t)X(t_j))\; ds+ \sigma \int_{t_{2n}}^{t_{2n+2}} (X(s)-X(t_{2n}))\; dW(s)
 |^2)\\
 &\leq 
 2\mu ^2 \mathbb{E} |\int_{t_{2n}}^{t_{2n+2}} (X(s)-
 \sum_{j=2n}^{2n+2} \psi_{n,j}(t)X(t_j))\; ds
 |^2
 +2\sigma^2 \mathbb{E} | \int_{t_{2n}}^{t_{2n+2}} (X(s)-X(t_{2n}))\; dW(s)|^2,
 \end{split}
\end{align}
Using the interpolation error formula, we get
\begin{align}
\begin{split}
 \mathbb{E} (|\delta_{2n+2}|^2)&\leq 
 2\mu ^2 \mathbb{E} |\int_{t_{2n}}^{t_{2n+2}} 
 X[t_{2n},t_{2n+1},t_{2n+2},s] (s-t_{2n})
 (s-t_{2n+1})
 (s-t_{2n+2})
 \; ds
 |^2\\
 &+2\sigma^2  \int_{t_{2n}}^{t_{2n+2}} \mathbb{E}|X(s)-X(t_{2n})|^2 ds,
 \end{split}
\end{align}
Note that
\begin{equation}
   X(s) -X(t_{2n})\approx \mu X(t_{2n} )(s-t_{2n})
   +\sigma X(t_{2n}) (W_s-W_{2n}).
\end{equation}
Therefore, when the time step $\Delta t$ tends to zero, there exists $k$ such that
\begin{equation}
 \mathbb{E} (|\delta_{{2n+2}}|^2)   \leq k \Delta t. 
\end{equation}
Similarly, we have the same result for \eqref{eq:res3}, i.e. there exists $k'$ such that
\begin{equation}
 \mathbb{E} (|\delta_{{2n+1}}|^2)   \leq k' \Delta t. 
\end{equation}
This proves the consistency. So, the proposed scheme is consistent in mean square.
\end{proof}

\begin{remark}
According to the theorems proved about the stability and consistency of the proposed scheme
and the stochastic version of the Lax- Richtmyer theorem \cite{Roth2001}, our method is conditionally convergent for solving stochastic differential equation \eqref{eq:sde}. 
\end{remark}

\section{Numerical results}
\label{sec:numerical}
In this section, the performance of the presented numerical technique described in section \ref{sec:methodology} for solving 
\eqref{eq:sde}
are considered and applied to some test problems. 

The errors reported are using the
$L^1$-error, $L^2$-error and $L^{\infty}$-error between the approximation and exact solutions:
\begin{align*}
\begin{split}
  \Vert e \Vert _{L^1} &= \frac{1}{N}\sum_{i=0}^N|X(t_i)-\tilde{X}_i |,  
\\
  \Vert e \Vert _{L^2} &= \sqrt{\frac{1}{N}\sum_{i=0}^N|X(t_i)-\tilde{X}_i |^2},   
\\
  \Vert e \Vert _{L^{\infty}} &= \max_{0\leq i\leq N}|X(t_i)-\tilde{X}_i |.  
  \end{split}
\end{align*}
where $X(t_i)$ and $\tilde{X}_i$ are the exact and approximate solutions, respectively.

We examine the performance of the proposed scheme for stochastic differential equation of the form:
\begin{equation}
\label{eq:test}
  dX(t)=\mu X(t) \;dt+\sigma X(t) \;dW(t), ~0\leq t \leq 1,  
\end{equation}
subject to the following initial condition
\begin{equation}
\label{eq:initial}
  X(0)=1.  
\end{equation}
The exact solution of \eqref{eq:test} with initial condition \eqref{eq:initial} is given by
\begin{equation}
  X(t)=X(0)e^{(\mu-\frac{\sigma^2}{2})t+\sigma W(t)}.  
\end{equation}
We set   $\sigma=0.5$.
We have plotted in Figure 
\ref{fig:stability} the stability region for our method when $\sigma=0.5$. 
Figure \ref{fig:stability_others} shows the stability region for implicit EM and Milstein methods.
Note that we have almost the same stability region for all the strategies. 
We assume $\mu=-1$.
Figure \ref{fig:exact_approx}
shows the stochastic solution
using  our scheme 
along with the exact  solution for
$\Delta t=2^{-8}$. 
They put on show the successful behaviour of our approach for solving  \eqref{eq:sde}.
The $L^1$-error, $L^2$-error and $L^{\infty}$-error are shown in Table \ref{tab:res_l1}, Table \ref{tab:res_l2} and Table \ref{tab:res_linf}, respectively.
Firstly, we show that in our method, the error decreases consistently under mesh refinement.
Under these assumptions, the stability
condition is satisfied.
Our method has
a superior performance when compared to
implicit EM and Milstein methods.

The numerical solution obtained with $ \mu=1$
is shown in Figure \ref{fig:instability}. 
We observe sample paths that deviate wildly, illustrating instability.
We observe that the computational results justify the theory of the stability conditions.

\begin{figure}[H]
\begin{center}
{\includegraphics[width=0.45\textwidth]{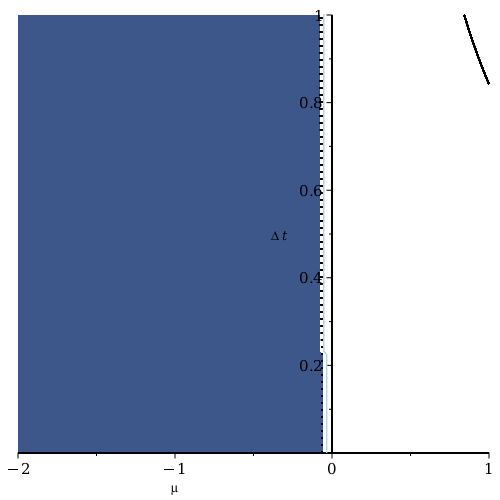}}
	\caption{Plot of the stability region (shaded area) determined by \eqref{eq:stability_condition1} for $\sigma=0.5$. 
}
	\label{fig:stability}
\end{center}
\end{figure}

\begin{figure}[H]
\begin{center}
\subfigure[Implicit EM]
{\includegraphics[width=0.45\textwidth]{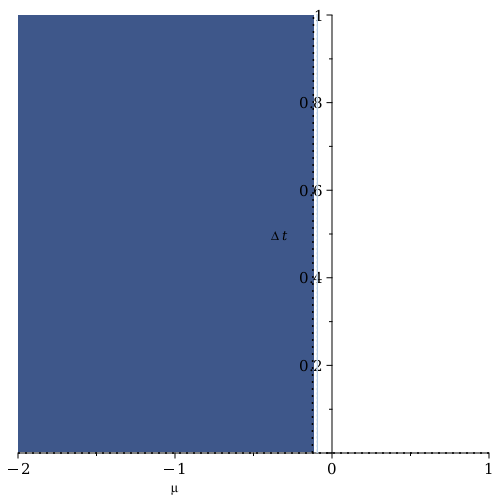}
 \label{fig:stability_euler}}
 \subfigure[Milstein]
{\includegraphics[width=0.45\textwidth]{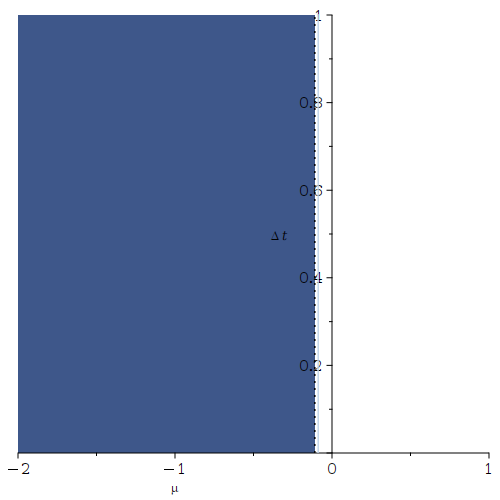}
 \label{fig:stability_milstein}}
\end{center}
	\caption{Plot of the stability region (shaded area)  for $\sigma=0.5$. Implicit EM on the left and Milstein on the right.}
 \label{fig:stability_others}
\end{figure}

\begin{figure}[H]
\begin{center}
{\includegraphics[width=0.45\textwidth]{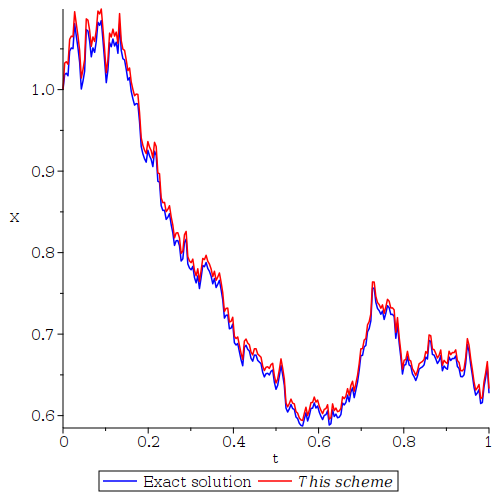}}
	\caption{Approximate and analytical solution of equation \eqref{eq:test} using the proposed method. 
}
	\label{fig:exact_approx}
\end{center}
\end{figure}

\begin{table}[H]
  \centering
  \caption{The $\Vert e \Vert_{L ^1}$ error using different methods for equation \eqref{eq:test}.
 } 
\begin{tabular}{|c ||c c c|} 
\hline
$N$ & This scheme & Implicit EM & Milstein\\
\hline
$2^2$ & $\textbf{5.1669e-03}$ & $2.2051e-02$  & $4.7405e-02$\\
$2^4$ & $\textbf{2.3844e-03}$ & $6.3626e-03$  & $1.2747e-02$\\
$2^6$ & $\textbf{6.9950e-04}$ & $1.6750e-03$  & $3.2850e-03$\\
$2^8$ & $\textbf{1.9056e-04}$ & $4.2290e-04$  & $8.3239e-04$\\
$2^{10}$ & $\textbf{5.0966e-05}$ & $1.0671e-04$  & $2.0978e-04$\\
\hline
\end{tabular}
	\label{tab:res_l1}
\end{table}

\begin{table}[H]
  \centering
  \caption{The $\Vert e \Vert_{L ^2}$ error using different methods for equation \eqref{eq:test}.
 } 
\begin{tabular}{|c ||c c c|} 
\hline
$N$ & This scheme & Implicit EM & Milstein\\
\hline
$2^2$ & $\textbf{7.6716e-03}$ & $2.7473e-02$  & $5.3226e-02$\\
$2^4$ & $\textbf{3.1373e-03}$ & $7.3833e-03$  & $1.3261e-02$\\
$2^6$ & $\textbf{8.9318e-04}$ & $1.8964e-03$  & $3.3430e-03$\\
$2^8$ & $\textbf{2.3630e-04}$ & $4.7560e-04$  & $8.4199e-04$\\
$2^{10}$ & $\textbf{6.3238e-05}$ & $1.2002e-04$  & $2.1162e-04$\\
\hline
\end{tabular}
	\label{tab:res_l2}
\end{table}

\begin{table}[H]
  \centering
  \caption{The $\Vert e \Vert_{L ^{\infty}}$ error using different methods for equation \eqref{eq:test}.
 } 
\begin{tabular}{|c ||c c c|} 
\hline
$N$ & This scheme & Implicit EM & Milstein\\
\hline
$2^2$ & $\textbf{1.5005e-02}$ & $4.1655e-02$  & $6.4008e-02$\\
$2^4$ & $\textbf{6.5865e-03}$ & $1.1182e-02$  & $1.5031e-02$\\
$2^6$ & $\textbf{1.9095e-03}$ & $2.8493e-03$  & $3.7154e-03$\\
$2^8$ & $\textbf{5.0434e-04}$ & $7.1227e-04$  & $9.2942e-04$\\
$2^{10}$ & $\textbf{1.3240e-04}$ & $1.8051e-04$  & $2.3177e-04$\\
\hline
\end{tabular}
	\label{tab:res_linf}
\end{table}

\begin{figure}[H]
\begin{center}
{\includegraphics[width=0.45\textwidth]{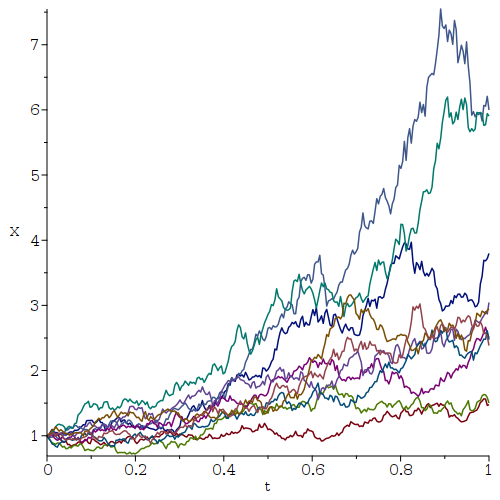}}
	\caption{10 simulations using  the proposed method for $\mu=1$.
}
	\label{fig:instability}
\end{center}
\end{figure}

\section{Conclusion}
\label{sec:conclusion}

We presented a methodology for the numerical solution of stochastic differential equations.
We subdivide the time interval  into sub-intervals.
We approximate the
drift term over
three successive node
points using quadratic polynomials.
 The most
important properties of a stochastic numerical
schemes, namely,  stability, consistency and convergence  have been described and analyzed. 
The proposed method have been illustrated by numerical examples. The numerical experiments demonstrate
that using our method leads
to a better performance in comparison to implicit EM and Milstein schemes.
\\
In the future, we would like to extend such methods for  the general form of the SDEs.

\bibliographystyle{unsrt}
\bibliography{references} 

\end{document}